\newtheorem{theorem}{Theorem}[section]
\newtheorem{lemma}{Lemma}
\newtheorem{proposition}{Proposition}
\newtheorem{definition}{Definition}
\newtheorem{remark}{Remark}
\newtheorem{example}{Example}
\definecolor{labelcolor}{RGB}{100,0,0}
\definecolor{outputcolor}{RGB}{0,0,100}
\definecolor{dkgreen}{rgb}{0,0.6,0}
\definecolor{gray}{rgb}{0.5,0.5,0.5}
\definecolor{mauve}{rgb}{0.20 , 0.40, 1.0}
\lstdefinelanguage{Maxima}
{morekeywords={allbut, block, break, buildq, do, else, elseif, error, errcatch, for, go, if, in, is, local, new, step, then, thru, unless, while, return, true, false, tellsimp, tellsimpafter},
	sensitive=true,
	comment=[s]{/*}{*/},
	morestring=[b]"
}
\newcommand {\Cl}[1] {\ensuremath{ C\ell_{#1} } }
\begin{document}
	%
	\title[Multivector inverses in Clifford algebras]{Algorithmic computation of multivector inverses and characteristic polynomials in non-degenerate Clifford algebras}
	%
	%
	\author{Dimiter Prodanov\textsuperscript{1,2} }
	\address{
		PAML-LN, IICT, Bulgarian Academy of Sciences, Sofia, Bulgaria; \\
		Neuroelectronics Research Flanders and EHS, IMEC,  Leuven, Belgium 
		}
		\email{dimiter.prodanov@imec.be}
	
	\begin{abstract}
		The power of Clifford or, geometric, algebra lies in its ability to represent geometric operations in a concise and elegant manner.
		Clifford algebras provide the natural generalizations of complex, dual numbers and quaternions into non-commutative multivectors.
		The paper demonstrates an algorithm for the computation of inverses of such numbers in a non-degenerate Clifford algebra of an arbitrary dimension.
		The algorithm is a variation of the Faddeev-LeVerrier-Souriau algorithm and is implemented in the open-source Computer Algebra System Maxima.
		Symbolic and numerical examples in different Clifford algebras are presented. 
		
		\keywords{multivector  \and Clifford algebra \and computer algebra}
	\end{abstract}

	\maketitle
	
	\section{Introduction}\label{sec:intro}
	Clifford algebras provide the natural generalizations of complex, dual  and split-complex (or hyperbolic) numbers into the concept of \textit{Clifford numbers}, i.e. general multivectors.
	The power of Clifford or, geometric, algebra lies in its ability to represent geometric operations in a concise and elegant manner.
	The development of Clifford algebras is based on the insights of Hamilton, Grassmann, and Clifford from the 19\textsuperscript{th} century.
	After a hiatus lasting many decades, the Clifford geometric algebra experienced a renaissance with the advent of contemporary computer algebra systems. 
	Clifford algebras can be implemented in a variety of general-purpose computer languages and computational platforms.
	Recent years have seen renewed interest in Clifford algebra platforms: 
	notably, for Maple, Matlab, Mathematica, Maxima,
	\textit{Ganja.js} for JavaScript, \textit{GaLua} for Lua, 
	\textit{Galgebra} for Python, 
	\textit{Grassmann} for Julia. 
	
	Computation of  inverses of multivectors has drawn continuos attention in the literature as the problem was only gradually solved \cite{Hitzer2017,Acus2018,Hitzer2019,Shirokov2021}.
	The present contribution demonstrates an algorithm for  multivector inversion,
	 which involves only multiplications and subtractions and has a variable number of steps, depending on the spanning subspace of the multivector.  
	The algorithm is implemented in Maxima using the Clifford package \cite{Prodanov2016a,Prodanov2023}.

	In order to compute an inverse of a multivector, 
	previous contributions
	use series of automorphisms of special types. This allows one to write basis-free formulas with increasing complexity.
	On the other hand, the present algorithm is based on Faddeev--LeVerrier--Souriau (FVS) algorithm for characteristic polynomial and matrix inverse computation. 
	The correctness of the algorithm is proven using an algorithmic, constructive representation of a multivector in the matrix algebra over the reals, but it by no means depends on such a representation. 
	The present FVS algorithm is in fact a proof certificate for the existence of an inverse. 
	To the present author's knowledge the FVS algorithm has not been used systematically to exhibit multivector inverses.

	\section{Notation and Preliminaries}\label{sec:prelim}
	
	$\Cl{n}$ will denote a Clifford algebra of order \textit{n} but with unspecified signature. 
	Clifford multiplication is denoted by simple juxtaposition of symbols. 
	Algebra generators will be indexed by Latin letters. 
	Multi-indices will be considered as index \textbf{lists} and not as sets and will be denoted with capital letters.
	The operation of taking k-grade part of an expression will be denoted by $\left\langle . \right\rangle_k$ 
	and in particular the scalar part will be denoted by $\left\langle . \right\rangle_0$.
	Set difference is denoted by $\triangle$.
	Matrices will be indicated with bold capital letters, while matrix entries will be indicated by lowercase letters.
	The \textit{scalar product} of the blades will be denoted by $\ast$;
	$t$ in superscript will denote the grade negation operation, while $^\sim$ -- the Clifford product reversion. 
	
	\begin{definition}
		The generators of the Clifford algebra will be denoted by indexed symbol $e$.
		It will be assumed that there is an ordering relation $\prec$, such that for two natural numbers $i<j \Longrightarrow e_i \prec e_j$.
		The \textbf{extended basis} set of the algebra will be defined as the ordered power set  
		$
		\mathbf{B} := \left\lbrace P (E), {\prec} \right\rbrace 
		$	 
		of all generators $E =\{e_1, \ldots, e_n \}$ and their irreducible products. 
	\end{definition}
 
	\begin{definition}
		Define the diagonal scalar product matrix as 
		$
		\mathbf{G} := \{\sigma_{IJ}=e_I \ast e_J  | \ e_I, e_J \in  \mathbf{B}, I \prec J \} 
		$.
	\end{definition}
	A multivector will be written as $A= a_1 + \sum_{k=1}^{r} \left\langle A\right\rangle_k =  a_1 + \sum_{J} a_J e_J $. The maximal grade of $A$ will be denoted by $\mathrm{gr}[A]$. The pseudoscalar will be denoted by $I$.
	
	\section{Clifford algebra real matrix representation map}\label{sec:main2}
	
	In the present  we will focus on non-degenerate Clifford algebras, therefore the non-zero elements of $\mathbf{G}$ are valued in  $\left\lbrace -1, 1 \right\rbrace$.
	Supporting results are presented in Appendix \ref{sec:supp}.

	\begin{definition}[Clifford coefficient map]\label{def:coefmap}
		Define the linear map acting element-wise
		$
		C_a : \Cl{n} \mapsto \mathbb{R}  
		$
		by the action
		$ 	C_a (ax+b) = x$ 
		for $ x \in \mathbb{R}, a,b \in \mathbf{B}$.
		
		Define the Clifford \textbf{coefficient map} indexed by  $e_S$ as
		$
		\mathbf{A}_S :=C_S ( \mathbf{M})
		$,
		where $\mathbf{M} $ is the multiplication table of the extended basis $\mathbf{M}  = \left\lbrace   \mathcal{R} (e_M e_N) \ | \ e_M, e_N \in  \mathbf{B}  \right\rbrace$, and $\mathbf{A}_S$ action of the map.
	\end{definition}
	
	\begin{definition}[Canonical matrix map]
		\label{def:matcanon}
		Define the map $ \pi  :   \mathbf{B} \mapsto \mathbf{Mat}_\mathbb{R}  ({2^{n}} \times  {2^{n}} ), \ n=p+q+r	$ as
		$
		\pi : e_{S} \mapsto \mathbf{E}_s:= \mathbf{G} \mathbf{A}_s
		$
		where \textit{s} is the ordinal of $e_{S} \in \mathbf{B}$ and $\mathbf{A}_S$ is computed as in Def. \ref{def:coefmap}.
	\end{definition}
	
	\begin{proposition}\label{prop:coeflin}
		The  $\pi$-map is linear.
	\end{proposition}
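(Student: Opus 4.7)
The plan is to reduce the linearity of $\pi$ to the linearity already built into the coefficient map of Def.~\ref{def:coefmap} together with the distributivity of matrix multiplication. Note that Def.~\ref{def:matcanon} specifies $\pi$ only on elements of the extended basis $\mathbf{B}$, so the proposition implicitly asserts that the recipe $\pi(e_S)=\mathbf{G}\mathbf{A}_S$ extends coherently to all of $\Cl{n}$.

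The first step I would take is to extend the coefficient map linearly in its \emph{index}. Given a multivector $A=\sum_{S} a_S\, e_S$, I set $C_A:=\sum_S a_S\, C_{e_S}$, which is a linear functional on $\Cl{n}$ since each $C_{e_S}$ acts element-wise on the basis expansion of its argument. Applying $C_A$ element-wise to the multiplication table $\mathbf{M}$ produces the matrix $\mathbf{A}_A$ with entries $(\mathbf{A}_A)_{MN}=C_A(e_M e_N)$, in direct parallel with Def.~\ref{def:coefmap}, and I then put $\pi(A):=\mathbf{G}\mathbf{A}_A$.

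The second step is the entrywise verification. For scalars $\alpha,\beta\in\mathbb{R}$ and multivectors $A,B\in\Cl{n}$, the identity
$$C_{\alpha A+\beta B}(e_M e_N)=\alpha\, C_A(e_M e_N)+\beta\, C_B(e_M e_N)$$
gives $\mathbf{A}_{\alpha A+\beta B}=\alpha\mathbf{A}_A+\beta\mathbf{A}_B$. Since left multiplication by the fixed diagonal matrix $\mathbf{G}$ is a linear operation on $\mathbf{Mat}_\mathbb{R}(2^n\times 2^n)$, one concludes
$$\pi(\alpha A+\beta B)=\mathbf{G}(\alpha\mathbf{A}_A+\beta\mathbf{A}_B)=\alpha\,\pi(A)+\beta\,\pi(B).$$
Because this extension agrees with Def.~\ref{def:matcanon} on each basis element $e_S$, it is the unique linear extension of the originally given map, and the proposition follows.

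I do not anticipate a substantive obstacle: the argument is essentially bookkeeping around the linearity of coefficient extraction. The only point where mild care is needed is to state explicitly that the element-wise extraction of coefficients commutes with linear combinations in the index, which is implicit in the phrase ``acting element-wise'' of Def.~\ref{def:coefmap} but deserves to be made precise before invoking the entrywise identity above.
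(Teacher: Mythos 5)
Your argument is correct and is essentially the paper's own (one-line) justification, spelled out in more detail: the paper likewise derives the claim from the linearity of the coefficient map together with the linearity of multiplication by the fixed matrix $\mathbf{G}$. Your additional remark that $\pi$ must first be extended from the basis $\mathbf{B}$ to all of $\Cl{n}$ is a reasonable clarification but not a different method.
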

	The proposition follows from the linearity of the coefficient map and matrix multiplication with a scalar.
	\begin{theorem}[Semigroup property]\label{th:semigr}
		Let $e_s$ and $e_t$ be generators of  $\Cl{p,q}$ .
		Then the following statements hold  
		\begin{enumerate}
			
			\item
			The map $\pi$ is a homomorphism  with regard to the Clifford product (i.e. $\pi$ distributes over the Clifford products):
			$
			\pi (e_s e_t) = \pi (e_s) \pi(e_t) 
			$.
			\item
			The set of all matrices $\mathbf{E}_s$ forms a multiplicative semigroup.
		\end{enumerate}
	\end{theorem}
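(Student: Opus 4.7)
\medskip

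\noindent\textbf{Proof plan.} By Proposition \ref{prop:coeflin} the map $\pi$ is $\mathbb{R}$-linear, so the homomorphism identity $\pi(xy)=\pi(x)\pi(y)$ on all of $\Cl{p,q}$ will follow as soon as it is established on the extended basis $\mathbf{B}$. The plan is therefore to pick two basis blades $e_s,e_t\in\mathbf{B}$, compute the $(M,N)$-entry of the matrix product $\pi(e_s)\pi(e_t)=\mathbf{G}\mathbf{A}_s\mathbf{G}\mathbf{A}_t$ by hand, and show that it coincides with the corresponding entry of $\pi(e_s e_t)=\mathbf{G}\mathbf{A}_{e_s e_t}$, the latter being well-defined because in a non-degenerate $\Cl{p,q}$ we have $e_s e_t=\epsilon\, e_u$ for some $e_u\in\mathbf{B}$ and $\epsilon\in\{-1,+1\}$, so $\mathbf{A}_{e_s e_t}=\epsilon\mathbf{A}_u$ by linearity.

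Since $\mathbf{G}$ is diagonal with entries $\sigma_K=e_K\ast e_K\in\{-1,+1\}$, the key entrywise expansion is
\[
[\mathbf{G}\mathbf{A}_s\mathbf{G}\mathbf{A}_t]_{MN} \;=\; \sigma_M\sum_{K}(\mathbf{A}_s)_{MK}\,\sigma_K\,(\mathbf{A}_t)_{KN}
\;=\; \sigma_M\sum_{K}\sigma_K\, C_s(e_M e_K)\, C_t(e_K e_N).
\]
The crucial observation is that $C_s(e_M e_K)$ is non-zero for a single index $K=K_\ast$, namely the one determined (up to sign) by $e_{K_\ast}=\sigma_M(e_M e_s)$, using $e_M^{-1}=\sigma_M e_M$. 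The sum therefore collapses to one term, and associativity of the Clifford product in the form $e_M e_{K_\ast}e_{K_\ast}e_N=\sigma_{K_\ast}e_M e_N$ will let me rewrite the surviving summand as $\sigma_M\, C_{e_s e_t}(e_M e_N)=[\mathbf{G}\mathbf{A}_{e_s e_t}]_{MN}$. This yields statement (1).

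For statement (2), once the homomorphism property is in hand, closure is immediate: any product $\mathbf{E}_s\mathbf{E}_t$ equals $\epsilon\,\mathbf{E}_u$ with $e_s e_t=\epsilon e_u\in\pm\mathbf{B}$, so the family $\{\pm\mathbf{E}_s:e_s\in\mathbf{B}\}$ is closed under matrix multiplication; associativity is inherited from $\mathbf{Mat}_\mathbb{R}(2^n\times 2^n)$, which is what is required of a multiplicative semigroup.

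The main obstacle I anticipate is not the algebraic structure but the sign bookkeeping: the identity to prove combines four sources of sign, namely $\sigma_M$, $\sigma_{K_\ast}$, the Clifford sign appearing in $e_M e_{K_\ast}=\pm e_s$, and the sign in $e_{K_\ast}e_N=\pm e_t$, and these must combine to exactly the sign $\epsilon$ arising in the single product $e_M e_N$ relative to $e_s e_t$. Verifying this collapse cleanly will almost certainly require appealing to the sign/parity lemmas collected in the supporting appendix (Appendix \ref{sec:supp}), which record how permutations of generators and squarings $e_K^2=\sigma_K$ interact inside a blade.
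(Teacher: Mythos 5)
Your proposal is correct and follows essentially the same route as the paper: your entrywise collapse of $\sigma_M\sum_K\sigma_K C_s(e_M e_K)C_t(e_K e_N)$ to a single term via sparsity and $e_{K_\ast}e_{K_\ast}=\sigma_{K_\ast}$ is precisely the content of the paper's Multiplication Matrix Structure Lemma (Lemma \ref{th:struct}), which the paper's proof invokes in the specialized form $\sigma_\lambda m_{\lambda\lambda'}=\sigma_\lambda m_{\lambda\mu}\sigma_\mu m_{\mu\lambda'}$, and your closure argument for the semigroup statement matches the paper's (with your explicit $\pm$ bookkeeping being, if anything, slightly more careful).
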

	\begin{proof}
		Let $\mathbf{E}_s= \pi  (e_s),  \mathbf{E}_t = \pi( e_t), \mathbf{E}_{st}= \pi ( e_s e_t)$.
		We specialize the result of Lemma \ref{th:struct}  for $S= \{ s\}$ and $T= \{ t\}$ and observe that
		$
		m_{ \lambda \lambda^\prime  } \, e_{s t} =  m_{\lambda \mu }  \sigma_\mu m_{\mu \lambda^\prime }\, e_{s t} 
		$
		for  $ \lambda, \lambda^\prime, \mu  \leq n$ and 
		$
		\sigma_\lambda m_{ \lambda \lambda^\prime  } =  \sigma_\lambda m_{\lambda \mu }  \sigma_\mu m_{\mu \lambda^\prime } 
		$. In summary, 
		the map $\pi$ acts on \Cl{p,q} according to the following diagram:
		\begin{center}		
			\begin{tikzpicture}
				\matrix (m) [matrix of math nodes, row sep=3em,column sep=4em, minimum width=2em, minimum height=2em, nodes={asymmetrical rectangle}]
				{
					e_s &   \mathbf{E}_s   \\
					e_s e_t \equiv e_{st} &  \mathbf{E}_{st} \equiv \mathbf{E}_{s} \mathbf{E}_{t}, \ st = s \cup t \\
				};
				\path[->, font=\scriptsize ]
				(m-1-1) edge  node [right] { $e_t$  } (m-2-1)
				edge   node [above] {$\pi $} (m-1-2)
				(m-1-2) edge node [right] { $\mathbf{E}_t$  }(m-2-2)   
				(m-2-1) edge node[above] {$\pi$} (m-2-2);
			\end{tikzpicture}		
		\end{center}
		Therefore,
		$
		\mathbf{E}_{s t} = \mathbf{E}_s \mathbf{E}_t
		$.
		Moreover, we observe that 
		$
		\pi (e_s e_t) =\mathbf{E}_{s t}= \mathbf{E}_s \mathbf{E}_t = \pi (e_s) \pi(e_t) 
		$.
		
		For the semi-group property observe that 
		since $\pi$ is linear it is invertible. 
		Since $\pi$ distributes over Clifford product its inverse $\pi^{-1}$ distributes over
		matrix multiplication:
		\[
		\pi^{-1} (\mathbf{E}_{s} \mathbf{E}_{t} ) \equiv  \pi^{-1} (\mathbf{E}_{s t}) = e_{st} \equiv e_{s} e_{t} = \pi^{-1} (\mathbf{E}_s) \ \pi^{-1} (\mathbf{E}_t)
		\]
		However, $\Cl{p,q}$ is closed by construction, therefore, the set $\left\lbrace \mathbf{E}\right\rbrace _s$ is closed under	matrix multiplication.
	\end{proof}
	\begin{proposition}
		Let $\mathbf{L}:= \left\lbrace l_i | \; l_i \in \mathbf{B} \right\rbrace$ be a column vector and
		$\mathbf{R}_s$ be the first row of $\mathbf{E}_s$.
		Then 
		$\pi^{-1}: \mathbf{E}_s \mapsto  \mathbf{R}_s \mathbf{L}$.
	\end{proposition}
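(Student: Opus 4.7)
The plan is to unpack the definitions of $\pi$, $\mathbf{A}_s$, and $\mathbf{G}$ in order to read off the first row $\mathbf{R}_s$ explicitly. The key bookkeeping observation is that, since $\mathbf{B}$ is the \emph{ordered} power set of the generators, its first element under $\prec$ is the empty product, namely the scalar $1$. Consequently, the first row of the multiplication table $\mathbf{M}$ consists of the reduced products $\mathcal{R}(1 \cdot e_N) = e_N$ as $e_N$ ranges over $\mathbf{B}$; that row is literally $\mathbf{L}^\top$.

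Next I would apply the Clifford coefficient map $C_s$ entrywise. By Definition \ref{def:coefmap}, $C_s(e_N)$ equals $1$ when $e_N = e_s$ and $0$ otherwise, so the first row of $\mathbf{A}_s = C_s(\mathbf{M})$ is the standard row vector with a single $1$ in position $s$. Because $\mathbf{G}$ is diagonal and its first diagonal entry is $\sigma_{11} = \spart{1 \cdot 1} = 1$, left multiplication by $\mathbf{G}$ leaves this row unchanged. Hence
\[
\mathbf{R}_s \mathbf{L} = \sum_N \delta_{sN}\, l_N = l_s = e_s,
\]
which coincides with $\pi^{-1}(\mathbf{E}_s)$ since $\pi(e_s) = \mathbf{E}_s$ by Definition \ref{def:matcanon}.

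The extension to an arbitrary element of the image of $\pi$ is then immediate from the linearity of $\pi$ (Proposition \ref{prop:coeflin}): writing $\mathbf{E} = \sum_s a_s \mathbf{E}_s$, the first row of $\mathbf{E}$ equals $\sum_s a_s \mathbf{R}_s$, and multiplying by $\mathbf{L}$ on the right recovers $\sum_s a_s e_s$. The only genuine obstacle in the whole argument is a convention check, namely that the ordering $\prec$ places $1$ in the first slot of $\mathbf{B}$ and that $\sigma_{11}=1$; once these are made explicit the proof reduces to the two short bookkeeping computations above.
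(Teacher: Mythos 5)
Your proof is correct and follows essentially the same route as the paper: both arguments come down to showing that the first row of $\mathbf{E}_s$ is the unit row vector with a single $1$ in position $s$ (since the first basis element is the scalar $1$ and $\sigma_{11}=1$), whence $\mathbf{R}_s\mathbf{L}=e_s$. The only difference is cosmetic — the paper cites the sparsity result (Prop.~\ref{th:sparsec}) to justify that $\sigma_1 m_{1s}=1$ is the sole non-zero entry, while you verify the same fact directly from the definitions of $\mathbf{M}$, $C_s$ and $\mathbf{G}$, and additionally note the extension by linearity.
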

	\begin{proof}
		We observe that by the Prop. \ref{th:sparsec} the only non-zero element in the first row of $\mathbf{E}_s $ is 
		$\sigma_1 m_{1 s} = 1$.
		Therefore, 
		$ \mathbf{R}_s \mathbf{L} = e_s $.
	\end{proof}
	
	\begin{theorem}[Complete Real Matrix Representation]\label{th:clirep}
		Define the map
		$g: \mathbf{A} \mapsto \mathbf{G} \mathbf{A}$ as matrix multiplication with $\mathbf{G}$.
		Then for a fixed multiindex $s$ 
		$
		\pi = C_s \circ g = g \circ C_s
		$.
		Further, $\pi$ is an isomorphism inducing a Clifford algebra representation in the real matrix algebra:
		\begin{center}
			\begin{tikzpicture}
				\matrix (m) [matrix of math nodes, row sep=3em,column sep=4em, minimum width=2em, minimum height=2.5em, nodes={asymmetrical rectangle}]
				{
					\Cl{p,q} (\mathbb{R}) &   \mathbf{Mat}_{\mathbb{R}} \left( 2^{n} \times  2^{n}  \right)  \\
				};
				\path[ -> ]
				([yshift=0.5ex] m-1-1.east)  edge node [above] { $\pi$}   ([yshift=0.5ex] m-1-2.west) ;
				\path[ -> ]
				([yshift= -0.5ex] m-1-2.west)  edge   node [below] { $\pi^{-1}$} ([yshift=-0.5ex] m-1-1.east) ;
			\end{tikzpicture}
		\end{center}
	\end{theorem}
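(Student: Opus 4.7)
The plan is to prove the two assertions of the theorem separately, relying heavily on the structural results already established.

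For the identity $\pi = C_s \circ g = g \circ C_s$, I would argue directly from the definitions. By Definition~\ref{def:matcanon}, $\pi(e_S) = \mathbf{G}\mathbf{A}_S = \mathbf{G}\,C_S(\mathbf{M})$, which is exactly $g(C_S(\mathbf{M}))$. To obtain the other composition order, I would invoke the fact that $\mathbf{G}$ is a diagonal matrix whose entries lie in $\{-1,0,1\}\subset\mathbb{R}$. Since $C_S$ is defined to act element-wise and is $\mathbb{R}$-linear (by Prop.~\ref{prop:coeflin}), it commutes with left multiplication by any matrix of scalars. Thus $\mathbf{G}\,C_S(\mathbf{M}) = C_S(\mathbf{G}\,\mathbf{M}) = (C_S \circ g)(\mathbf{M})$, giving $\pi = g\circ C_S = C_S \circ g$ as maps.

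For the isomorphism assertion, I would verify the three conditions of a Clifford algebra representation in turn. Linearity is already Prop.~\ref{prop:coeflin}, and multiplicativity on basis elements is Theorem~\ref{th:semigr}. Extending multiplicativity to arbitrary multivectors $A = \sum_I a_I e_I$ and $B = \sum_J b_J e_J$ follows from the bilinearity of both the Clifford product and matrix multiplication: $\pi(AB) = \sum_{I,J} a_I b_J\,\pi(e_I e_J) = \sum_{I,J} a_I b_J\,\pi(e_I)\pi(e_J) = \pi(A)\pi(B)$. For bijectivity onto its image, I would use the preceding proposition, which shows that the first row of $\mathbf{E}_s$ recovers $e_s$ via $\mathbf{R}_s\mathbf{L}$. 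This gives an explicit left inverse on each basis matrix, and extending $\pi^{-1}$ by linearity produces a two-sided inverse on the linear span. In particular, the matrices $\{\mathbf{E}_s : e_s\in\mathbf{B}\}$ must be linearly independent in $\mathbf{Mat}_{\mathbb{R}}(2^n\times 2^n)$, because a nontrivial linear relation among them would, upon reading off first rows against $\mathbf{L}$, produce a nontrivial linear relation among the basis elements of $\Cl{p,q}$.

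The main obstacle I anticipate is making the injectivity argument fully rigorous, since the preceding proposition only describes $\pi^{-1}$ on individual basis matrices $\mathbf{E}_s$ and does not a priori guarantee that these matrices are linearly independent in $\mathbf{Mat}_{\mathbb{R}}(2^n\times 2^n)$. To close this gap I would combine the sparsity statement (Prop.~\ref{th:sparsec}) with the first-row recovery: each $\mathbf{E}_s$ has its unique non-zero entry in position $(1,s)$ of its first row equal to $\sigma_1 m_{1s} = 1$, so the $2^n$ first rows are supported on distinct columns and hence linearly independent. This forces the $\mathbf{E}_s$ themselves to be linearly independent, and since the domain and the image both have dimension $2^n$, $\pi$ is a bijection onto its image. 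Combined with the homomorphism and linearity properties already verified, this exhibits $\pi$ as the desired algebra isomorphism onto its image in $\mathbf{Mat}_{\mathbb{R}}(2^n\times 2^n)$, with $\pi^{-1}$ given on basis matrices by $\mathbf{R}_s\mathbf{L}$ and extended linearly.
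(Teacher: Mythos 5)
Your proposal is correct in substance, but it argues along a different axis than the paper. The paper's own proof treats the identity $\pi = C_s \circ g = g \circ C_s$ as essentially definitional and concentrates on showing that the matrices $\mathbf{E}_s$ reproduce the \emph{defining relations} of the Clifford algebra: it cites the Sparsity Lemma \ref{th:sparsity} for $\mathbf{E}_s \mathbf{E}_t \neq \mathbf{0}$, Prop.~\ref{prop:antisym} for $\mathbf{E}_s \mathbf{E}_t = -\mathbf{E}_t \mathbf{E}_s$, and Prop.~\ref{prop:vsquare} for $\mathbf{E}_s^2 = \sigma_s \mathbf{I}$, concluding that $\{\mathbf{E}_S\}$ is an image of the extended basis; the linear-isomorphism claim itself is asserted rather than argued. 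You instead take multiplicativity from Theorem~\ref{th:semigr}, extend it bilinearly, and then supply an explicit injectivity argument: by Prop.~\ref{th:sparsec} and the first-row computation $\sigma_1 m_{1s}=1$, the first rows of the $\mathbf{E}_s$ are distinct standard unit row vectors, so the $2^n$ matrices are linearly independent and $\pi$ is a linear bijection onto its image. This is a genuine improvement on the weakest point of the paper's proof (the unjustified ``$\pi$ is a linear isomorphism''), whereas the paper's route makes explicit that the representation respects the signature data $\sigma_s$, which your argument inherits only implicitly through the semigroup theorem. Your treatment of $C_S \circ g = g \circ C_S$ (element-wise linearity of $C_S$ commuting with left multiplication by the diagonal scalar matrix $\mathbf{G}$) is fine and is more than the paper offers. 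One small gap to patch: Theorem~\ref{th:semigr} is stated only for generators $e_s, e_t$, while your bilinear extension $\pi(AB)=\pi(A)\pi(B)$ needs $\pi(e_I e_J)=\pi(e_I)\pi(e_J)$ for arbitrary extended-basis blades; this follows either by induction on the length of the blades or by invoking Lemma~\ref{th:struct} directly in its general multi-index form, and you should say which.
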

	\begin{proof}
		The  $\pi$-map is a linear isomorphism.
		The set $ \left\lbrace \mathbf{E}_s \right\rbrace  $ forms a multiplicative group, which is a subset of the matrix algebra 
		$ \mathbf{Mat}_{\mathbb{R}} (N \times N), N=2^n $. 
		Let 
		$ \pi (e_s)= \mathbf{E}_s$ and $
		\pi (e_t)= \mathbf{E}_t 
		$.
		It is claimed that
		\begin{enumerate}
			\item $ \mathbf{E}_s \mathbf{E}_t \neq \mathbf{0}$ by the Sparsity Lemma \ref{th:sparsity}.
			\item $ \mathbf{E}_s \mathbf{E}_t =- \mathbf{E}_t \mathbf{E}_s$ by Prop. \ref{prop:antisym}.
			\item $ \mathbf{E}_s \mathbf{E}_s = \sigma_s \mathbf{I}$ by Prop. \ref{prop:vsquare}.
		\end{enumerate}
		Therefore, the set $ \left\lbrace \mathbf{E}_S \right\rbrace_{S=\{1\}}^{P(n)} $ is an image of the extended basis $\mathbf{B}$. Here $P(n)$ denotes the power set of the indices of the algebra generators.
	\end{proof}
	What is special about the above representation is the relationship
	\begin{equation}\label{eq:trace}
		\mathrm{tr} \mathbf{A} = 2^n \left\langle A\right\rangle _0
	\end{equation}
	for the image $\pi(A)= \mathbf{A}$ of a general multivector element $A$
	and it will be used further in the proof of FVS algorithm.
	\begin{remark}
		The above construction works if instead of the entire algebra \Cl{p,q} we restrict a multivector to a sub-algebra of a smaller grade $\mathrm{max \ gr} [A] =r$. 
		In this case, we form grade-restricted multiplication matrices $\mathbf{G}_r$ and $\mathbf{M}_r$.
	\end{remark}
	
	\section{FVS multivector inversion algorithm}\label{sec:appli}
	Multivector inverses can be computed using the matrix representation and the characteristic polynomial.
	The matrix inverse is 
	$
	\mathbf{A}^{-1} =\hat{\mathbf{A}}/ {\det{\mathbf{A} }} 
	$,
	where $\hat{}$ denotes the  adjunct operation and $\det{\mathbf{A} }$ is the  determinant.
	The formula is not practical, because it requires the computation of  $n^2+1$ determinants. 
	By Cayley-Hamilton's Theorem, the inverse of \textbf{A} is a polynomial in \textbf{A}, which can be computed as the last step of the FVS algorithm \cite{Faddeev1949}.
	The  algorithm has a direct representation in terms of Clifford multiplications as follows:
	\begin{theorem}[Reduced-grade FVS algorithm]\label{th:clinverse}
		Suppose that $A \in \Cl{p,q}$ is a multivector of maximal grade $r \leq s$ and $ A \subseteq \mathrm{span}[e_1, \ldots, e_s]$.
		The Clifford inverse, if it exists, can be computed in 
		$k =  2^{\lceil s/2 \rceil} $ steps as
		\[
		\begin{array}{l|lr}
			m_1 = A               & c_1 = - k  {A}  \ast  1,    & t_1:= -c_1  \\
			m_2 = A m_2- t_1      & c_2 =-\frac{k}{2}   A \ast m_1,   & t_2:= -c_2  \\
			$\ldots$ 			  & $ \ldots $    \\
			m_k = A m_{k-1}- t_k  & c_k =   - A \ast m_{k-1},    & t_k:= -c_k
		\end{array}
		\]
		until the step where $m_k=0$ so that
		\begin{equation}
			{A}^{-1} = -{m_{k-1}}/{c_k}  .
		\end{equation}
		The inverse does not exist if $c_k= -\det{A} = 0$.
		
		The (reduced) characteristic polynomial of A of maximal grade $r$ is 
		\begin{equation}
			p_A( \lambda)=  \lambda^k + c_1 \lambda^{k-1} + \ldots c_{k-1} \lambda + c_k .
		\end{equation}
	\end{theorem}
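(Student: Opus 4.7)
The plan is to lift the problem to the canonical real matrix representation of Theorem~\ref{th:clirep}, invoke the classical matrix Faddeev--LeVerrier--Souriau (FVS) scheme there, and pull each step back through the isomorphism $\pi^{-1}$.

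First, I would set $\mathbf{A} := \pi(A)$. Since $A$ is supported on the subalgebra generated by $e_1, \ldots, e_s$, the remark following Theorem~\ref{th:clirep} allows the substitution of $\mathbf{G}, \mathbf{M}$ by the grade-restricted tables $\mathbf{G}_s, \mathbf{M}_s$, so that $\mathbf{A}$ lives in a faithful $k\times k$ matrix representation with $k = 2^{\lceil s/2 \rceil}$. Theorem~\ref{th:semigr} converts Clifford products into matrix products, and the reduced version of equation~\eqref{eq:trace} reads $\mathrm{tr}(\mathbf{A}) = k \langle A \rangle_0$; in particular the scalar product translates as $A \ast m = \langle A m \rangle_0 = \mathrm{tr}(\mathbf{A}\,\pi(m))/k$.

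Second, I would invoke the classical FVS-Souriau recursion on matrices: initializing $\mathbf{B}_0 = \mathbf{I}$ and iterating $\mathbf{M}_i = \mathbf{A}\mathbf{B}_{i-1}$, $c_i = -\mathrm{tr}(\mathbf{M}_i)/i$, $\mathbf{B}_i = \mathbf{M}_i + c_i \mathbf{I}$ for $i = 1, \ldots, k$, Newton's power-sum identities together with the Cayley--Hamilton theorem yield that the $c_i$ are the coefficients of the characteristic polynomial $p_{\mathbf{A}}(\lambda) = \lambda^k + c_1 \lambda^{k-1} + \cdots + c_k$, that the recursion terminates with $\mathbf{B}_k = \mathbf{0}$, and hence $\mathbf{A}^{-1} = -\mathbf{B}_{k-1}/c_k$ whenever $c_k \neq 0$. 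Since $c_k = (-1)^k \det \mathbf{A}$ up to sign, its vanishing detects non-invertibility exactly.

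Third, I would pull this recursion back through $\pi^{-1}$. Writing $m_i := \pi^{-1}(\mathbf{M}_i)$ and $t_i := -c_i$, the trace-to-scalar-part translation turns the trace formulas into $c_i = -(k/i)\,A \ast m_{i-1}$ and the matrix update $\mathbf{B}_{i} = \mathbf{M}_i + c_i \mathbf{I}$ combined with $\mathbf{M}_{i+1} = \mathbf{A}\mathbf{B}_i$ into the displayed Clifford recursion $m_{i+1} = A\,m_i - t_i$ (with the scalar shift absorbed). The transported terminal identity is $A\,m_{k-1} + c_k = 0$, giving the inverse formula $A^{-1} = -m_{k-1}/c_k$ and the annihilating polynomial $p_A(\lambda)$.

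The main obstacle is justifying the reduced step count $k = 2^{\lceil s/2 \rceil}$: one must show that the restriction of $\pi$ to the subalgebra generated by $e_1, \ldots, e_s$ decomposes into irreducible blocks of size $k$, so that the minimal polynomial of $\pi(A)$ has degree at most $k$ and the FVS recursion really does close in $k$ steps. Constructively this follows from the block structure of the reduced tables $\mathbf{G}_s, \mathbf{M}_s$; abstractly it reflects the Cartan--Bott classification of real Clifford algebras as matrix algebras over $\mathbb{R}, \mathbb{C}, \mathbb{H}$. A secondary bookkeeping point, routine by induction, is matching the theorem's $t_i$-shift form to the canonical ``$+c_i \mathbf{I}$'' form of matrix FVS, so that the stated formula for $c_i$ in terms of $A \ast m_{i-1}$ coincides with $-\mathrm{tr}(\mathbf{M}_i)/i$ after absorbing the Souriau update into $m_i$.
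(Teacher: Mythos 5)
Your overall route is the same as the paper's: push $A$ through the (grade-restricted) representation $\pi$ of Theorem~\ref{th:clirep}, run the classical matrix FVS recursion there, and pull each step back with $\pi^{-1}$, using the trace identity \eqref{eq:trace} to turn $\mathrm{tr}[\mathbf{A}\mathbf{M}_{k}]$ into scalar parts $A \ast m_{k}$; that part of your argument reproduces the paper's proof essentially step for step, including the terminal identity $A m_{k-1} - t_k = 0$ giving $A^{-1} = m_{k-1}/t_k$.

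The gap is in the reduction of the step count to $k = 2^{\lceil s/2\rceil}$, which you correctly single out as the main obstacle but then dispatch with two claims that do not hold as stated. First, there is in general no faithful $k \times k$ real representation: the paper's grade-restricted construction yields a $2^{s}\times 2^{s}$ matrix with $\mathrm{tr}\,\mathbf{A} = 2^{s}\langle A\rangle_0$, not $k\langle A\rangle_0$; for $\Cl{0,2}\cong\mathbb{H}$ one has $k=2$, yet $\mathbb{H}$ has no faithful $2\times 2$ real representation and its irreducible real block is $4\times 4$. Second, a bound of $k$ on the degree of the minimal polynomial is not what the recursion needs: the coefficients $c_i$ are computed from (rescaled) traces, hence from the characteristic polynomial of the $2^{s}$-dimensional representation, so what must be proved is that this characteristic polynomial is the $m$-th power ($m=2^{s}/k$) of a monic degree-$k$ polynomial that annihilates $A$; only then are the rescaled traces genuine power sums of a degree-$k$ annihilating polynomial and only then does the recursion close with $m_k=0$. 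In the quaternionic cases this is precisely the nontrivial point (the $4\times 4$ real block of $\mathbb{H}$ happens to have a characteristic polynomial that is a perfect square), and ``irreducible blocks of size $k$'' does not deliver it. The paper closes this gap by a different device: explicit verification that $p_A$ is a perfect square for $n=2$ and $n=3$ in all signatures, propagated through the even-subalgebra isomorphism $\Cl{p,q}^{+}\cong\Cl{q-1,p-1}$ to lower the dimension by two, which is what licenses $k=2^{\lceil s/2\rceil}$. To repair your version you would need either that argument or an honest appeal to the reduced characteristic polynomial and reduced Cayley--Hamilton theorem for the algebras $\mathbf{Mat}_m(\mathbb{R})$, $\mathbf{Mat}_m(\mathbb{C})$, $\mathbf{Mat}_m(\mathbb{H})$ (and their two-block sums) in the Cartan--Bott classification, which is a strictly stronger input than a minimal-polynomial degree bound.
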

	
	\begin{proof}
		The proof follows from the homomorphism of the $\pi$ map.
		We recall the statement of  FVS algorithm:
		\[
		p_A(\lambda)= \det{(\lambda \mathbf{I}_n  - \mathbf{A})}  = \lambda^n + c_1 \lambda^{n-1} + \ldots c_{n-1} \lambda + c_n, \quad n= \mathrm{dim}(\mathbf{A}),
		\]
		where
		\[
		\begin{array}{l|lr}
			\mathbf{M}_1 = \mathbf{A},                                   & t_1 = \ \  \mathrm{tr} [\mathbf{M}_1],            & c_1=- t_1 \\
			\mathbf{M}_2 = \mathbf{A} \mathbf{M}_1 - t_1 \mathbf{I}_n ,    & t_2 =\frac{1}{2}\mathrm{tr}[\mathbf{A} \mathbf{M}_1], & c_2=- t_2 \\
			$\ldots$ & $\ldots$ & $\ldots$ \\
			\mathbf{M}_n = \mathbf{A} \mathbf{M}_{n-1} - t_n \mathbf{I}_n, & t_n =\frac{1}{n}\mathrm{tr}[\mathbf{A} \mathbf{M}_{n-1}], & c_n=- t_n.
		\end{array}
		\]
		The matrix inverse can be computed from the last step of the algorithm as
		$
		\mathbf{A}^{-1} =   \mathbf{M}_{n-1} /{t_n}
		$
		under the obvious restriction $t_n \neq 0$.
		
		Therefore, for the k\textsuperscript{th} step of the algorithm application of $\pi^{-1}$ leads to
		$$
		\pi^{-1} :\mathbf{M}_k = \mathbf{A} \mathbf{M}_{k-1} - t_k  \mathbf{I} \mapsto	m_k = A m_{k-1}- t_k.
		$$ 
		Furthermore, 
		$  \mathrm{tr} [\mathbf{M}_k]  = n \left\langle m_k\right\rangle_0 = t_k $ by eq. \ref{eq:trace}.
		Moreover, the FVS algorithm  terminates with $\mathbf{M}_n=\mathbf{0}$, which corresponds to the limiting case $n=2^{p+q}$ wherever A contains all grades.	
		
		On the other hand, examining the matrix representations of different Clifford algebras, Acus and Dargys \cite{Acus2022} make the observation that according to the Bott periodicity the number of steps can be reduced to  $ 2^{\lceil n/2 \rceil}$.
		This can be proven as follows. 
		Consider the isomorphism
		$ 
		\Cl{p,q } \supset \Cl{p,q }^{+} \cong \Cl{q-1, p-1}
		$.
		Therefore, if a property holds for an algebra of dimension $n$ it will hold also for the algebra of dimension $n-2$. 
		Therefore, suppose that for $n$ even the characteristic polynomial is square free: $p_A(v) \neq q(v)^2$ for some polynomial. We proceed by reduction. 
		For $n=2$ in \Cl{2,0} and $A = {a_1}+{e_1} {a_2}+{e_2} {a_3}+{e_{12}} {a_4} $ we compute
		$
		p_A(v)= {{\left( {{a}_{1}^{2}}-{{a}_{2}^{2}}-{{a}_{3}^{2}}+{{a}_{4}^{2}}-2 {a_1} v+{{v}^{2}}\right) }^{2}}
		$
		and a similar result holds also for the other signatures of \Cl{2}. Therefore, we have a contradiction and the dimension can be reduced to $k=n/2$.
		
		In the same way, suppose that  $n$ is odd and the characteristic polynomial is square-free.
		However, for $n=3$ in \Cl{3,0} and 
		$A = {a_1}+{e_1} {a_2}+{e_2} {a_3}+{e_3} {a_4}+{a_5} e_{12} +{a_6} e_{13}  +{a_7} e_{23}  +{a_8}  e_{123} $
		it is established that $p_A(v)= q(v)^2$ for $	q(v) = $
		\begin{multline*}
		( {{a}_{1}^{2}}-{{a}_{2}^{2}}-{{a}_{3}^{2}}-{{a}_{4}^{2}}+{{a}_{5}^{2}} +{{a}_{6}^{2}}+{{a}_{7}^{2}}-{{a}_{8}^{2}} + 
		2 i ({a_3} {a_6}- {a_4} {a_5} - {a_2} {a_7}+ {a_1} {a_8} )-2 ({a_1} +  i {a_8}) v+{{v}^{2}}) \\
		( {{a}_{1}^{2}}-{{a}_{2}^{2}}-{{a}_{3}^{2}}-{{a}_{4}^{2}}  +{{a}_{5}^{2}}+{{a}_{6}^{2}}+{{a}_{7}^{2}}-{{a}_{8}^{2}}
		+2 i ({a_4} {a_5}- {a_3} {a_6}+ {a_2} {a_7}- {a_1} {a_8})-2( {a_1} -  i {a_8}) v+{{v}^{2}}). 
		\end{multline*}
		The above polynomial is factored due to space limitations. 
		Similar results hold also for the other signatures of \Cl{3}. 
		Therefore, we have a contradiction and the dimension can be reduced to $k=(n+1)/2$.
	Therefore, overall, one can reduce the number of steps to $k = 2^{\lceil n/2 \rceil} $.
	
	As a second case, suppose that $\mathrm{gr} [A]=r$.
	Let $E_r =\mathrm{span} [A]$  be the set of all generators, represented in A and \textit{s} their number.
	We compute the restricted multiplication tables $\mathbf{M} (E_r)$ and respectively $\mathbf{G} (E_r)$ and form the restricted map $\pi_r$. 
	Then 
	\[
	\pi_r (A A^{-1}) = \pi_r (A)  \pi_r(A^{-1}) =\mathbf{A}  \mathbf{A}^{-1} =\mathbf{I}_n, \quad n=2^s .
	\]
	Therefore, the FVS algorithm  terminates in $k=2^s$ steps.  
	Observe that 
	$
	\pi^{-1} :   \mathbf{A} \mathbf{M}_k  \mapsto A  m _k
	$. Therefore, $\mathrm{tr}[ \mathbf{A} \mathbf{M}_k ] $ will map to $n  A \ast m _k$ by eq. \ref{eq:trace}.
	Now, suppose that  $t_k \neq 0$; then for the last step of the algorithm we obtain:
	\[
	A m_{k-1}- t_k =0 \Rightarrow A  {m_{k-1}}/{t_k}=1 \Rightarrow A ^{-1}=	 {m_{k-1}}/{t_k} .
	\]
	Therefore, by the argument of the previous case, the number of steps can be  reduced to $k = 2^{\lceil s/2 \rceil} $.
\end{proof}
\section{Implementation}\label{sec:code}
Computations are performed using the \emph{Clifford} package in Maxima, which was first demonstrated in \cite{Prodanov2016a}. The present version of the package is 2.5 and it is available for download from a Zenodo repository \cite{Prodanov2023}.
The function \rmfamily{fadlevicg2cp} returns the inverse (if it exists) and the characteristic polynomial $p_A(v)$ of a multivector $A$ (Appendix \ref{sec:prg}).

\section{Experiments}

Experiments were performed on a Dell\textsuperscript{\textregistered } 64-bit Microsoft Windows 10 Enterprise machine with configuration -- 
Intel\textsuperscript{\textregistered } Core\textsuperscript{TM} i5-8350U CPU @ 1.70GHz, 1.90 GHz and 16GB RAM.
The computations were performed using the Clifford package version 2.5  
on Maxima version 5.46.0 using Steel Bank Common Lisp version 2.2.2.
\subsection{Symbolical experiments}\label{sec:symex}
\begin{example}
	For \Cl{2,0} and a multivector $ A= a_0 + a_1 e_1 +a_2 e_2 + a_3 e_{12} $
	the  reduced grade algorithm produces
	\[
	\begin{array}{ll}
		t_{1}=-2 a_1, &
		m_{1}=a_1+e_1 a_2+e_2 a_3+a_4 e_{12}, \\
	\end{array}
	\]
	resulting in
	$
	A^{-1} = {({a_1}-{e_1} {a_2}-{e_2} {a_3}-{a_4}  e_{12}) }/{({{a}_{1}^{2}}-{{a}_{2}^{2}}-{{a}_{3}^{2}}+{{a}_{4}^{2}})}
	$
	and the characteristic polynomial is
	$
	p_A(v)=
	{{a}_{1}^{2}}-{{a}_{2}^{2}}-{{a}_{3}^{2}}+{{a}_{4}^{2}}-2 {a_1} v+{{v}^{2}}
	$.
\end{example}
\begin{example}
	For \Cl{1,1} and a multivector$ A= a_0 + a_1 e_1 +a_2 e_2 + a_3 e_{12} $
	the reduced grade algorithm produces
	\[
	\begin{array}{ll}
		t_{1}=-2 a_1, &
		m_{1}=a_1+e_1 a_2+e_2 a_3+a_4 e_{12},\\
	\end{array}
	\]
	resulting in
	$
	A^{-1} =  {(-{a_1}+{e_1} {a_2}+{e_2} {a_3}+{a_4}  e_{12} )}/{(-{{a}_{1}^{2}}+{{a}_{2}^{2}}-{{a}_{3}^{2}}+{{a}_{4}^{2}})}
	$
	and the characteristic polynomial is 
	$
	p_A(v)=
	{{a}_{1}^{2}}-{{a}_{2}^{2}}+{{a}_{3}^{2}}-{{a}_{4}^{2}}-2 {a_1} v+{{v}^{2}}
	$.
\end{example}
\begin{example}
	For \Cl{0,2} and a multivector $ A= a_0 + a_1 e_1 +a_2 e_2 + a_3 e_{12} $
	the reduced grade algorithm produces
	\[
	\begin{array}{ll}
		t_{1}=-2 a_1, &
		m_{1}=a_1+e_1 a_2+e_2 a_3+a_4 e_{12},\\
	\end{array}
	\]
	resulting in
	$
	A^{-1} = {({a_1}-{e_1} {a_2}-{e_2} {a_3}-{a_4}  e_{12}) }/{({{a}_{1}^{2}}+{{a}_{2}^{2}}+{{a}_{3}^{2}}+{{a}_{4}^{2}})}
	$
	and the characteristic polynomial is
	$
	p_A(v)=
	{{a}_{1}^{2}}+{{a}_{2}^{2}}+{{a}_{3}^{2}}+{{a}_{4}^{2}}-2 {a_1} v+{{v}^{2}}
	$.
\end{example}
Bespoke computations are practically instantaneous on the testing hardware configuration.
Higher-dimensional symbolic examples produce very long expressions and are not particularly instructive.  

\subsection{Numerical experiments} 
Note that the trivial last steps will be omitted because of space limitations.
To demonstrate the utility of FVS algorithm here follow some high-dimensional numerical examples.
\begin{example}
Let us compute a rational example in \Cl{2,5}.	
Let $A=1 - 2 B+5 C   $, where $B:=e_{15} $ and $C:= e_1e_3e_4$. Then $ \mathrm{span}[A]= \{e_1, e_3, e_4, e_5 \}$ and
 for the maximal representation we have $k=2^4=16$ steps:
\[  
\begin{array}{ll}
t_{ 1 }= -16  , & m_{ 1 }= -15+5 C-2 B; \\ 
t_{ 2 }= 288  , & m_{ 2 }= 252-70 C+28 B; \\ 
t_{ 3 }= -2912  , & m_{ 3 }= -2366+1190 C-476 B; \\ 
t_{ 4 }= 29456  , & m_{ 4 }= 22092-10640 C+4256 B; \\ 
t_{ 5 }= -213696  , & m_{ 5 }= -146916+99820 C-39928 B; \\ 
t_{ 6 }= 1509760  , & m_{ 6 }= 943600-634760 C+253904 B; \\ 
t_{ 7 }= -8250496  , & m_{ 7 }= -4640904+4083240 C-1633296 B; \\ 
t_{ 8 }= 43581024  , & m_{ 8 }= 21790512-19121280 C+7648512 B; \\ 
t_{ 9 }= -181510912  , & m_{ 9 }= -79411024+89831280 C-35932512 B; \\ 
t_{ 10 }= 730723840  , & m_{ 10 }= 274021440-307223840 C+122889536 B; \\ 
t_{ 11 }= -2275435008  , & m_{ 11 }= -711073440+1062883360 C-425153344 B; \\ 
t_{ 12 }= 6900244736  , & m_{ 12 }= 1725061184-2492483840 C+996993536 B; \\ 
t_{ 13 }= -15007376384  , & m_{ 13 }= -2813883072+6132822080 C-2453128832 B; \\ 
t_{ 14 }= 32653412352  , & m_{ 14 }= 4081676544-7936593280 C+3174637312 B; \\ 
t_{ 15 }= -39909726208  , & m_{ 15 }= -2494357888+12471789440 C-4988715776 B. \\
\end{array}
\]
Therefore,
$
A^{-1}=  \left(1 -5 C + 2 B   \right) /22
$
and
$
p_A(v)= (22 -2 v +v^2)^8
$.
Evaluation took 0.0469 s using 12.029 MB memory on Maxima.
On the other hand, the reduced algorithm will run in $k=2^{ \lceil 4/2 \rceil } =4$ steps:
\[  
\begin{array}{ll}
t_{1}=-4, & m_{1}=1+5 C-2 B;\\
t_{2}=48, & m_{2}=-24-10 C+4 B;\\
t_{3}=-88, & m_{3}=66+110 C-44 B; 
\end{array}
\]
and $p_A(v) =484-88v+48v^2-4v^3+v^4= (22 -2 v +v^2)^2$. 
Evaluation took 0.0156 s  using 2.512 MB memory on Maxima.
Note, that in this case
$ det A = A A^{\sim} = 22$. Therefore, in accordance with Shirokov's approach
$
A^{-1}=    A^{\sim}/22
$. 
\end{example}

\begin{example}
Consider \Cl{5,2} and  let
$
A= 1 -{e_2} + I
$.
The full-grade algorithm takes 128 steps and will not be illustrated due to space limitation.
The reduced grade algorithm can be illustrated as follows.
Let $C=e_{134567}$. Then  
\[
\begin{array}{ll}
t_{1}=-16, & m_{1}=1-e_{2}+I;\\
t_{2}=120, & m_{2}=-15+14 e_{2}-14 I+2 C;\\
t_{3}=-560, & m_{3}=105-89 e_{2}+93 I-26 C;\\
t_{4}=1836, &m_{4}=-459+340 e_{2}-388 I+156 C;\\
t_{5}=-4560, & m_{5}=1425-881 e_{2}+1145 I-572 C;\\
t_{6}=9064, & m_{6}=-3399+1682 e_{2}-2562 I+1454 C;\\
t_{7}=-14960, & m_{7}=6545-2529 e_{2}+4557 I-2790 C;\\
t_{8}=20886, & m_{8}=-10443+3096 e_{2}-6648 I+4296 C;\\
t_{9}=-24880, & m_{9}=13995-3051 e_{2}+8091 I-5448 C;\\
t_{10}=25480, & m_{10}=-15925+2386 e_{2}-8242 I+5694 C;\\
t_{11}=-22416, & m_{11}=15411-1475 e_{2}+7007 I-4934 C;\\
t_{12}=16716, & m_{12}=-12537+596 e_{2}-4932 I+3548 C;\\
t_{13}=-10480, & m_{13}=8515-35 e_{2}+2795 I-1980 C;\\
t_{14}=5400, & m_{14}=-4725-50 e_{2}-1150 I+850 C;\\
t_{15}=-2000, & m_{15}=1875+125 e_{2}+375 I-250 C,\\
\end{array}
\]
resulting in
$
A^{-1}=  ({1} -   {e_2}- {3} \,  I + {2}\,  C )/5.
$.
The characteristic polynomial can factorize as
$
p_A(v) =(1+v^2)^4 (5-4 v+v^2)^4
$.

It should be noted that in this case, the determinant $ \det{A}$ can be computed by the sequence of operations 
$
B= A A^t = 1- 2 I
$, followed by
$
\det{A}= B B^{\sim} = 5
$.  
This allows for writing the formula
\[
A^{-1} =  A^t   (A A^t )^\sim/5
\]
in accordance with Shirokov's approach. 
\end{example}

\section{Concluding remarks}\label{sec:conclusion}
The maximal matrix algebra construction exhibited in the present paper allows for systematic translation of matrix-based algorithms to Clifford algebra simultaneously allowing for their direct verification. 

The advantage of the multivector FVS algorithm is its simplicity of implementation. 
This can be beneficial for purely numerical applications as it involves only Clifford  multiplication, and taking scalar parts of multivectors, which can be encoded as the first member of an array. 
The Clifford multiplication computation can be reduced to $  \mathcal{O} (N \log{N})$ operations, since it involves sorting of a joined list of algebra generators.  
On the other hand, the present algorithm does not ensure optimality of the computation but only provides a certificate of existence of an inverse. 
Therefore, optimized algorithms can be introduced for particular applications, i.e.  Space-Time Algebra \Cl{1,4}, Projective Geometric Algebra \Cl{3,0,1}, Conformal Geometric Algebra \Cl{4,1}, etc. 
As a side product, the algorithm can compute the characteristic polynomial of a general multivector and, hence,  its determinant also without any resort to a matrix representation. 
This could be used, for example, for  computation of a multivector resolvent or some other analytical functions. 
	
One of the main applications of the present algorithms could be in Finite Element Modelling where the Geometric algebra can improve the efficiency and accuracy of calculations by providing a more compact representation of vectors, tensors, and geometric operations. 
This can lead to faster and more accurate simulations of elastic deformations. 


\section*{Acknowledgment}
The present work is funded  in part by the European Union's Horizon Europe program under grant agreement VIBraTE, grant agreement 101086815.

\appendix

\section{Supporting results}\label{sec:supp}

\begin{definition}[Sparsity property]
A matrix has the \textit{sparsity property} if it has exactly one non-zero element per column and exactly one non-zero element per row. Such a matrix we call sparse.
\end{definition}

\begin{lemma}[Sparsity lemma]\label{th:sparsity}
If the matrices $\mathbf{A}$ and $\mathbf{B}$ are sparse then so is $\mathbf{C}=\mathbf{A B}$.
Moreover,
\[
c_{ij} = \begin{cases}
0 \\
a_{i q} b_{q j} 
\end{cases}
\] 
(no summation!) for some index $q$.
\end{lemma}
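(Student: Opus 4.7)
The plan is to exploit the fact that a sparse matrix (in the sense of this lemma) is essentially a generalized permutation matrix, so its action is governed by an underlying permutation of indices. I would first introduce, for the sparse matrix $\mathbf{A}$, a function $\alpha$ assigning to each row index $i$ the unique column index $\alpha(i)$ at which the non-zero entry of row $i$ sits. Since there is also exactly one non-zero entry per column, $\alpha$ is a bijection of the index set. The same construction for $\mathbf{B}$ yields a bijection $\beta$.

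Next I would expand the product entry $c_{ij} = \sum_{k} a_{ik} b_{kj}$ and ask which summands can possibly be non-zero. For $a_{ik} \neq 0$ the sparsity of $\mathbf{A}$ forces $k = \alpha(i)$; for $b_{kj} \neq 0$ the sparsity of $\mathbf{B}$ forces $k = \beta^{-1}(j)$. Hence at most one value of $k$ contributes, and the contribution is non-zero precisely when these two requirements are compatible, i.e.\ when $\alpha(i) = \beta^{-1}(j)$, equivalently $j = (\beta \circ \alpha)(i)$. This immediately gives the claimed formula $c_{ij} = a_{iq} b_{qj}$ with $q = \alpha(i)$, and no summation.

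Finally I would verify sparsity of $\mathbf{C}$ itself. Since $\beta \circ \alpha$ is a bijection, for each row $i$ there is exactly one $j$ making $c_{ij}$ non-zero, namely $j = (\beta \circ \alpha)(i)$; and for each column $j$ there is exactly one $i$, namely $i = \alpha^{-1}(\beta^{-1}(j))$. This establishes both the one-per-row and one-per-column conditions on $\mathbf{C}$.

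I expect no real obstacle here: the lemma is essentially the statement that products of generalized permutation matrices are again generalized permutation matrices, and the entire proof reduces to bookkeeping with the bijections $\alpha$ and $\beta$. The only point requiring mild care is insisting that both sparsity conditions (row and column) be used together when identifying the unique candidate index $q$, so that the non-zero pattern of $\mathbf{C}$ inherits both conditions rather than only one.
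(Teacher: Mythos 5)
Your proof is correct and follows essentially the same route as the paper: you use the uniqueness of the non-zero entry in each row of $\mathbf{A}$ and each column of $\mathbf{B}$ to collapse the sum $c_{ij}=\sum_k a_{ik}b_{kj}$ to the single candidate term, and then check the one-per-row and one-per-column conditions on $\mathbf{C}$. Your explicit packaging via the bijections $\alpha$, $\beta$ and the composite $\beta\circ\alpha$ is a slightly cleaner bookkeeping device than the paper's informal wording, but it is the same argument.
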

\begin{proof}
Consider two sparse square matrices $\mathbf{A}$ and $\mathbf{B}$  of dimension $n$.
Let $c_{ij} = \sum_{\mu} a_{i \mu} b_{\mu j}$. Then
as we vary the row  index $i$ then there is only one index $ q \leq n$, such that $ a_{i q}  \neq 0$.
As we vary the column index $j$ then there is only one index  $ q \leq n$, such that $ b_{ q j}  \neq 0$.
Therefore, $c_{ij} = (0; a_{i q} b_{q j}  ) $
for some $q$ by the sparsity of $\mathbf{A}$ and $\mathbf{B}$.	
As we vary the row  index $i$ then $c_{q j} =0 $ for $i \neq q$ for the column $j$ by the sparsity of  $\mathbf{A}$.
As we vary the column index $j$ then $c_{i q}=0$ for $j \neq q$ for the row $i$ by the sparsity of  $\mathbf{B}$.
Therefore, $\mathbf{A B}$ is sparse. 
\end{proof}

\begin{lemma}[Multiplication Matrix Structure]\label{th:struct}
For the  multi-index  disjoint sets $S \prec T$ 
the following implications hold for the   elements of $\mathbf{M}$ :
\begin{center}

\begin{tikzpicture}
	\matrix (m) [matrix of math nodes,row sep=3em,column sep=4em,minimum width=2em]
	{
		m_{\mu \lambda}\,  e_S &  m_{\mu   \lambda^\prime} e_T & \\
		m_{\lambda \mu} e_S &  m_{\lambda \mu} m_{\mu \lambda^{\prime } } e_{S \triangle T} &   m_{\lambda \lambda^{\prime \prime} } e_{S \triangle T} \\};
	\path[-> ]
	(m-1-1) edge node [left] { $\exists  $} (m-2-1)
	edge node [above] {$\exists \lambda^\prime > \lambda $} (m-1-2)
	(m-1-2) edge (m-2-2)
	(m-2-1) edge node[above] {$\exists$} (m-2-2)
	(m-2-2) edge node[above]{$\exists \lambda^{\prime \prime}= \lambda^\prime $}  (m-2-3);
\end{tikzpicture} 
\end{center}
so that 
$
m_{ \lambda \lambda^\prime  }  =  m_{\lambda \mu }  \sigma_\mu m_{\mu \lambda^\prime }
$
for some index $\mu$.
\end{lemma}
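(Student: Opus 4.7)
The plan is to derive the identity $m_{\lambda\lambda'} = m_{\lambda\mu}\,\sigma_\mu\,m_{\mu\lambda'}$ by exploiting associativity of the Clifford product together with the squaring relation $e_\mu e_\mu = \sigma_\mu$. First I would interpret each entry $m_{ij}$ of the multiplication table $\mathbf{M}$ as the $\pm 1$ sign appearing in the product $e_i e_j = m_{ij}\,e_{i\triangle j}$, where $i\triangle j$ denotes the symmetric difference of the multi-indices and the sign absorbs whatever reordering is needed to restore the canonical ordering $\prec$. This presentation is well-defined because in a non-degenerate signature the extended basis $\mathbf{B}$ forms a group under signed multiplication.

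The core calculation is to insert the identity $\sigma_\mu = e_\mu e_\mu$ between $e_\lambda$ and $e_{\lambda'}$. Using associativity,
\[
e_\lambda\, e_{\lambda'} \;=\; \sigma_\mu\, e_\lambda (e_\mu e_\mu) e_{\lambda'} \;=\; \sigma_\mu\,(e_\lambda e_\mu)(e_\mu e_{\lambda'}) \;=\; \sigma_\mu\, m_{\lambda\mu}\, m_{\mu\lambda'}\, e_S e_T,
\]
where the last equality uses the diagram's hypotheses that $e_\lambda e_\mu = m_{\lambda\mu}\,e_S$ and $e_\mu e_{\lambda'} = m_{\mu\lambda'}\,e_T$. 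Since $S$ and $T$ are disjoint with $S\prec T$, the product $e_S e_T$ equals $e_{S\triangle T}$ with no further reordering sign. On the other hand, direct evaluation gives $e_\lambda e_{\lambda'} = m_{\lambda\lambda'}\,e_{\lambda\triangle\lambda'}$, and $\lambda\triangle\lambda' = S\triangle T$ because $\mu$ appears in both $S=\mu\triangle\lambda$ and $T=\mu\triangle\lambda'$ and therefore cancels. Comparing the two expressions yields the claimed identity.

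The existence annotations on the diagram arrows ($\exists\lambda' > \lambda$, $\exists\lambda''=\lambda'$) merely assert that the relevant products inhabit entries of $\mathbf{M}$, which is immediate from the definition of $\mathbf{M}$ as the full multiplication table of $\mathbf{B}$. The main obstacle is clean bookkeeping of multi-index symmetric differences and of reordering signs: one must verify that no hidden sign is introduced when the two adjacent copies of $e_\mu$ are collapsed to $\sigma_\mu$, and that the disjointness $S\cap T = \emptyset$ together with $S\prec T$ truly suppresses any extra sign from $e_S e_T$. Once that bookkeeping is in place, the identity drops out in a single line of associativity.
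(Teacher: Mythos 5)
Your core computation is the same one the paper uses: insert $e_M e_M = \sigma_\mu$ between the two factors, split $e_L e_{L'}$ by associativity into the product of two multiplication-table entries, and compare with the direct entry $m_{\lambda\lambda'}$, using $\sigma_\mu^2=1$ in the non-degenerate case to move $\sigma_\mu$ across. Your observation that $\lambda \triangle \lambda' = (\lambda\triangle\mu)\triangle(\mu\triangle\lambda') = S\triangle T$ is in fact a cleaner way of getting the paper's ``$\lambda''=\lambda'$'' step, and dismissing the existence annotations as immediate from the definition of $\mathbf{M}$ is acceptable (the paper itself only remarks that the unit element always supplies an admissible $\mu$ with $\sigma_\mu\neq 0$).

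The genuine problem is the step ``since $S$ and $T$ are disjoint with $S\prec T$, the product $e_S e_T$ equals $e_{S\triangle T}$ with no further reordering sign.'' You flagged this as the main obstacle and then asserted it, but under the paper's ordering of the extended basis it is false for general multi-indices: in $\Cl{3,0}$ with the basis ordered by grade and then lexicographically, $S=\{2\}$ and $T=\{1,3\}$ are disjoint with $e_2\prec e_{13}$, yet $e_2 e_{13} = e_2 e_1 e_3 = -e_{123}$, so $e_S e_T = -\,e_{S\triangle T}$. In general $e_S e_T = \epsilon\, e_{S\triangle T}$ with $\epsilon=\pm1$ the reordering sign, and your comparison then delivers $m_{\lambda\lambda'} = \epsilon\,\sigma_\mu\, m_{\lambda\mu} m_{\mu\lambda'}$, which is the stated identity only when $\epsilon=1$. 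The paper sidesteps exactly this by reducing ``without loss of generality'' to generators $S=\{s\}$, $T=\{t\}$ with $s<t$, for which $e_s e_t = e_{st}$ holds by the very construction of the extended basis (irreducible products written in increasing generator order); that is also the only case actually invoked later, in Theorem~\ref{th:semigr}. To repair your argument you must either restrict to the generator case as the paper does, strengthen the ordering hypothesis to something like $\max S < \min T$ (elementwise precedence), or carry the sign $\epsilon$ explicitly through the final identity; as written, the no-sign claim is the missing piece rather than mere bookkeeping.
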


\begin{proof}
Suppose that the ordering of elements is given in the construction of \Cl{p,q,r}.
To simplify presentation, without loss of generality, suppose that $e_s$ and $e_t$ are some generators.
By the properties of $\mathbf{M}$ there exists an index $\lambda^\prime > \lambda$, such that
$e_{M} e_{L^\prime}   = m_{\mu \lambda^\prime }\,  e_t$, $  L^\prime \backslash M  = T$
for $L \prec   L^\prime $.
Choose $M$, s.d. $L \prec M \prec  L^\prime $.
Then for
$L \prec M \prec  L^\prime $ and $S \prec T$
\begin{flalign*}
e_{M} e_{L} & = m_{\mu \lambda}\,  e_s , \ \ {\small L \triangle M } = S
\Leftrightarrow e_{L} e_{M} = m_{\lambda \mu }\,  e_s \\
e_{M} e_{L^\prime} & = m_{\mu \lambda^\prime }\,  e_t, \ \ {\small L^\prime \triangle M } = T
\end{flalign*}

Suppose that $e_s e_t =  e_{s t}, st = S \cup T = S \triangle T$. 
Multiply together the diagonal nodes in the matrix
\[
e_{L} \underbrace{e_{M}  e_{M}}_{\sigma_\mu} e_{L^\prime} = m_{\lambda \mu }  m_{\mu \lambda^\prime }\, e_{s t} 
\]
Therefore,
$ s\in L $ and
$ t \in L^\prime $.
We observe that there is at least one element (the algebra unity) with the desired property $\sigma_\mu \neq 0$.

Further, we observe that there exists unique index $\lambda^{\prime \prime} $ such that $  m_{\lambda \lambda^{\prime \prime} } e_{s t} $.
Since $\lambda $ is fixed. This implies that $ L^{\prime \prime} = L^\prime \Rightarrow \lambda^{\prime \prime} = \lambda^{\prime}$.
Therefore,  
\[
e_{L} e_{L^\prime} = m_{ \lambda \lambda^\prime  } e_{s t},  \ \  L^\prime \triangle L  = \{ s, t \} 
\]
which implies the identity
$
m_{ \lambda \lambda^\prime  } \, e_{s t} =  m_{\lambda \mu }  \sigma_\mu m_{\mu \lambda^\prime }\, e_{s t} 
$.
For higher graded elements $e_S$ and $e_T$ we should write $e_{S \triangle T}$ instead of $e_{st}$.

\end{proof}

\begin{proposition}\label{prop:msparse}
Consider the multiplication table $\mathbf{M}$.
All elements $m_{k  j} $ are different for a fixed row $k$.
All elements $m_{i q }$ are different for a fixed column $q$.
\end{proposition}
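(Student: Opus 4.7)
The plan is to read $m_{ij}$ as the (signed) basis element obtained from the Clifford product of the $i$\textsuperscript{th} and $j$\textsuperscript{th} elements of the extended basis $\mathbf{B}$; that is, $m_{ij}$ records $e_i e_j$ in the form $\pm e_{I \triangle J}$. With this reading, both statements reduce to the injectivity of left (resp.\ right) multiplication by a fixed extended-basis element on $\mathbf{B}$ (up to sign), which follows from invertibility in the non-degenerate case.

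First, I would observe that in a non-degenerate Clifford algebra every extended-basis element $e_k \in \mathbf{B}$ is invertible: since each generator squares to $\pm 1$, a product $e_{i_1}\cdots e_{i_r}$ squares (possibly after applying reversion) to $\pm 1$, so $e_k^{-1} \in \mathbf{B}$ up to a sign. Next, for the row statement, I would fix a row index $k$ and suppose for contradiction that $m_{kj} = m_{kj'}$ with $j \neq j'$. By the definition of $\mathbf{M}$, this yields an equality of signed basis elements $e_k e_j = e_k e_{j'}$; left-multiplying by $e_k^{-1}$ gives $e_j = e_{j'}$, contradicting $j \neq j'$ since the ordered power set $\mathbf{B}$ indexes distinct multivectors. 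The column statement is proved symmetrically by right multiplication by $e_q^{-1}$.

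The only subtle point I would have to address carefully is the normalization convention: the entry $m_{ij}$ is defined through $\mathcal{R}(e_i e_j)$, so I must confirm that two entries are ``equal'' in the sense of the proposition precisely when the underlying signed basis elements coincide, not merely when their coefficients $\pm 1$ agree. This is clear from the usage in Lemma \ref{th:struct}, where $m_{\mu\lambda}\, e_S$ tags each entry by both a sign and a basis index, so distinctness of entries in a row or column is equivalent to distinctness of the resulting signed basis elements, which the cancellation argument establishes.

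I do not anticipate a real obstacle here; the main thing is to state the invertibility of extended-basis elements in $\Cl{p,q}$ explicitly (relying on non-degeneracy, as stressed at the start of Section \ref{sec:main2}), so that the left/right cancellation used in the contradiction step is fully justified within the hypotheses of the paper.
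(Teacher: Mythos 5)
Your proof is correct, but it runs on a different mechanism than the paper's. The paper's own argument is purely combinatorial: fixing the row $K$, the blade appearing in the entry is $e_{K\triangle J}$, and the map $J \mapsto K \triangle J$ is injective (cancellation for symmetric difference), so no algebraic structure beyond the indexing of $\mathbf{B}$ is used — in particular no invertibility and no appeal to non-degeneracy. You instead cancel inside the algebra, left-multiplying by $e_k^{-1}$, which exists because every basis blade squares to $\pm 1$ in the non-degenerate case; that is a legitimate and arguably more conceptual route (injectivity of multiplication by an invertible element), but it is logically heavier and would not survive in a degenerate algebra, whereas the paper's set-theoretic cancellation is signature-independent. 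One further difference worth noting: the paper's proof actually establishes the slightly stronger fact that the \emph{unsigned} blade supports $K\triangle J$ are pairwise distinct along a row, which is exactly what Proposition \ref{th:sparsec} needs for sparsity of $\mathbf{A}_s$ (two entries $+e_S$ and $-e_S$ in one row would both survive the coefficient map and ruin sparsity). Your version, as you frame it in the normalization remark, only concludes distinctness of the signed entries; however, your cancellation closes this gap with one extra line — from $e_k e_j = \pm e_k e_{j'}$ you get $e_j = \pm e_{j'}$, and linear independence of distinct basis blades (plus $2e_j \neq 0$) forces $j = j'$ — so you should state the conclusion at the level of blade supports to match how the proposition is used downstream.
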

\begin{proof}
Fix $k$. Then for $e_K, e_J \in \mathbf{B} $ we have
$
e_{K} e_{J} = m_{k  j} e_S, \ \ S = K \triangle J
$.
Suppose that we have equality for 2 indices $j, j^\prime$. Then
$
K \triangle J^\prime = K \triangle J = S
$.
Let $\delta = J \cap J^\prime$; then 
\[
K \triangle  \left( J \cup \delta \right)  = K \triangle J = S \Rightarrow K \triangle \delta = S \Rightarrow \delta = \emptyset
\]
Therefore, $j = j^\prime$. 
By symmetry, the same reasoning applies to a fixed column $q$.
\end{proof}
\begin{proposition}\label{th:sparsec}
For $e_s \in \mathbf{E}$ the matrix $\mathbf{A}_s = C_{s} (\mathbf{M})$ is sparse. 
\end{proposition}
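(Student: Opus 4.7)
My plan is to read off sparsity directly from the structure of $\mathbf{M}$ combined with Proposition \ref{prop:msparse}. Every entry of $\mathbf{M}$ has the form $m_{kj}\,e_{K\triangle J}$, where $(k,j)$ indexes the pair $(e_K,e_J)$ in $\mathbf{B}$. The coefficient map $C_s$ is linear and picks out the scalar in front of $e_s$, so the $(k,j)$-entry of $\mathbf{A}_s = C_s(\mathbf{M})$ equals $m_{kj}$ when $K\triangle J = s$ and $0$ otherwise. Sparsity is therefore equivalent to: for each row index $k$ there is exactly one column $j$ with $K\triangle J = s$, and for each column index $j$ exactly one row $k$ with the same property.

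The first step is to observe that the power set $P(E)$ equipped with $\triangle$ is an abelian group (isomorphic to $(\mathbb{Z}/2)^n$), with $\emptyset$ as identity and every element its own inverse. Hence for any fixed $K$ the equation $K\triangle J = s$ has the unique solution $J = K\triangle s$, and for any fixed $J$ the unique solution $K = J\triangle s$. Since the enumeration $e_J \mapsto j$ of $\mathbf{B}$ is a bijection, these translate into a unique column index per row and a unique row index per column where the $e_s$-coefficient is non-zero.

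The second step is to confirm that this unique entry is in fact non-zero, i.e.\ $m_{kj}\neq 0$. This is immediate in the non-degenerate setting, because the product of two basis blades in $\mathbf{B}$ is always $\pm$ another basis blade, never zero; equivalently, by Proposition \ref{prop:msparse}, the entries in a fixed row or column of $\mathbf{M}$ are all distinct non-zero multiples of distinct basis elements, so exactly one of them is $\pm e_s$ and contributes a non-zero scalar to $\mathbf{A}_s$.

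Putting these together gives exactly one non-zero entry per row and per column of $\mathbf{A}_s$, which is the definition of sparsity. I do not foresee a real obstacle here; the only thing that needs care is keeping the indexing straight between the multi-index $K$ (a subset of $\{1,\dots,n\}$) and its ordinal $k$ within $\mathbf{B}$, and making explicit that $P(E)$ under $\triangle$ is a group so that the relevant equations have unique solutions.
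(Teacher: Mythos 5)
Your proof is correct and follows essentially the same route as the paper: analyze the entries $m_{kj}\,e_{K\triangle J}$ of $\mathbf{M}$ row by row and column by column and conclude that $e_s$ appears exactly once in each row and each column, which is what the paper extracts from Proposition~\ref{prop:msparse}. Your explicit use of the group structure of $(P(E),\triangle)$ supplies the \emph{existence} of the unique solution $J=K\triangle S$ directly, a point the paper's proof takes for granted (its Proposition~\ref{prop:msparse}, as proven, only gives uniqueness, with existence implicit via counting), so your write-up is, if anything, slightly more complete.
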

\begin{proof}
Fix an element $e_s \in \mathbf{E}$.
Consider a row $k$.
By Prop. \ref{prop:msparse} there is a $j$, such $e_{k j}= e_s$.
Then $a_{k j}= m_{k j}$, while for $i \neq j$ $a_{k i}=0$.

Consider a column $l$
By Prop. \ref{prop:msparse} there is a $j$, such $e_{j l}= e_s$.
Then $a_{j l}= m_{j l}$, while for $i \neq j$ $a_{i l}= 0$.
Therefore, $\mathbf{A}_s$ has the sparsity property.
\end{proof}

\begin{proposition}\label{prop:antisym}
For generator elements $e_s$ and $e_t$
$
\mathbf{E}_s \mathbf{E}_t + \mathbf{E}_t \mathbf{E}_s= \mathbf{0}
$. 
\end{proposition}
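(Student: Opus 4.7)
The plan is to reduce the statement to the Semigroup property (Theorem~\ref{th:semigr}) together with the defining anticommutation relation between distinct generators of the Clifford algebra. Since $\pi$ is linear (Proposition~\ref{prop:coeflin}) and, by Theorem~\ref{th:semigr}, a homomorphism with respect to Clifford multiplication, the matrix identity can be obtained as the image under $\pi$ of a corresponding identity in $\Cl{p,q}$.

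First I would rewrite each summand using the homomorphism property: $\mathbf{E}_s \mathbf{E}_t = \pi(e_s)\pi(e_t) = \pi(e_s e_t)$, and analogously $\mathbf{E}_t \mathbf{E}_s = \pi(e_t e_s)$. Adding and invoking linearity of $\pi$ then gives
\[
\mathbf{E}_s \mathbf{E}_t + \mathbf{E}_t \mathbf{E}_s \;=\; \pi(e_s e_t + e_t e_s).
\]

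Next I would appeal to the defining relation of the Clifford algebra: for distinct generators $e_s \neq e_t$ one has $e_s e_t + e_t e_s = 0$. (This is the standard anticommutation that underlies the ordered power-set construction of the extended basis $\mathbf{B}$ by \emph{irreducible} products.) Applying $\pi$ to the zero multivector then yields $\mathbf{E}_s \mathbf{E}_t + \mathbf{E}_t \mathbf{E}_s = \pi(0) = \mathbf{0}$, which is the claim.

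The only thing to flag is the implicit hypothesis $s \neq t$: when $s = t$ the expression collapses to $2 \mathbf{E}_s^2 = 2\sigma_s \mathbf{I}$, which is covered separately by Proposition~\ref{prop:vsquare} and is consistent with how this proposition is invoked in the proof of Theorem~\ref{th:clirep}. Beyond that, there is no real obstacle — the argument is simply a transfer of the algebra identity $e_s e_t = -e_t e_s$ through the linear homomorphism $\pi$, which is why the result is stated as a proposition rather than a theorem.
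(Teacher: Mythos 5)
Your proposal is correct and follows essentially the same route as the paper: both transfer the anticommutation relation $e_s e_t + e_t e_s = 0$ for distinct generators through the linearity and homomorphism properties of the $\pi$-map established in Theorem~\ref{th:semigr}. Your explicit remark that the statement implicitly requires $s \neq t$ (the case $s = t$ being handled by Proposition~\ref{prop:vsquare}) is a sensible clarification that the paper leaves tacit.
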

\begin{proof}
Consider the basis elements $e_s$ and $e_t$. 
By linearity and homomorphism of the $\pi$ map (Th. \ref{th:semigr}):
$
\pi: e_s e_t + e_t e_s = 0 \mapsto \pi(e_s e_t) + \pi(e_t e_s) = \mathbf{0}
$.
Therefore, for two vector elements
$
\mathbf{E}_s \mathbf{E}_t + \mathbf{E}_t \mathbf{E}_s= \mathbf{0}
$.
\end{proof}

\begin{proposition}\label{prop:vsquare}
$ \mathbf{E_s} \mathbf{E_s} = \sigma_s \mathbf{I} $ 
\end{proposition}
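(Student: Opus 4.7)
The plan is to combine the defining relation $e_s^2 = \sigma_s$ in $\Cl{p,q}$ with the homomorphism property of $\pi$ already established in Theorem \ref{th:semigr}. Applying $\pi$ to both sides and using linearity (Prop. \ref{prop:coeflin}) gives
\[
\mathbf{E}_s \mathbf{E}_s = \pi(e_s)\,\pi(e_s) = \pi(e_s e_s) = \pi(\sigma_s \cdot 1) = \sigma_s\, \pi(1),
\]
so the whole statement reduces to verifying that $\pi$ sends the algebra unit to the identity matrix.

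To establish $\pi(1) = \mathbf{I}$, I would unfold the definitions. By Def. \ref{def:coefmap}, the entries of $\mathbf{A}_1 = C_1(\mathbf{M})$ are the coefficients of the unit $1$ in the reduced products $e_I e_J$. Writing $e_I e_J = m_{IJ}\, e_{I \triangle J}$, the result is a scalar exactly when $I \triangle J = \emptyset$, i.e.\ when $I = J$, in which case $m_{II} = \sigma_I \in \{-1,+1\}$. Thus $\mathbf{A}_1$ is the diagonal matrix $\mathrm{diag}(\sigma_I)$. Since $\mathbf{G}$ itself is diagonal with the same entries $\sigma_I$ on the diagonal (the scalar product vanishes on distinct basis blades), we get
\[
\pi(1) = \mathbf{G}\mathbf{A}_1 = \mathrm{diag}(\sigma_I)\,\mathrm{diag}(\sigma_I) = \mathrm{diag}(\sigma_I^2) = \mathbf{I},
\]
and the proposition follows.

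The main obstacle here is really only bookkeeping: one must be careful that the indexing conventions of Def. \ref{def:coefmap} and Def. \ref{def:matcanon} line up, in particular that the ordinal associated to the unit blade correctly selects the coefficient of $1$ in each cell of $\mathbf{M}$, and that $\mathbf{G}$ is indeed diagonal under the fixed ordering of $\mathbf{B}$. An alternative, purely matrix-theoretic route would be to invoke the Sparsity Lemma \ref{th:sparsity} to show $\mathbf{E}_s\mathbf{E}_s$ is sparse, then use Prop. \ref{th:sparsec} to locate its single nonzero entry per row and compute it explicitly as $\sigma_s$; but this is strictly more work than the homomorphism argument above, which I would prefer for brevity.
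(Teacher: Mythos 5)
Your main argument has a circularity problem. The step $\pi(e_s e_s) = \pi(e_s)\pi(e_s)$ is not actually available to you: Theorem \ref{th:semigr} is proved by specializing Lemma \ref{th:struct}, which is stated only for \emph{disjoint} multi-indices $S \prec T$, so the multiplicativity of $\pi$ is established there only for products of distinct basis blades (note also the diagram's $st = s \cup t$). The coincident case $s=t$ is exactly the content of Proposition \ref{prop:vsquare} — indeed, given $\pi(1)=\mathbf{I}$, the statement $\mathbf{E}_s\mathbf{E}_s=\sigma_s\mathbf{I}$ \emph{is} the assertion that $\pi$ respects the square $e_s e_s = \sigma_s$ — so invoking the homomorphism property here assumes what is to be proved. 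This is also why the proposition is one of the three items verified separately in Theorem \ref{th:clirep} (nonvanishing, anticommutation, squares): these are the Clifford relations that must be checked in the matrix algebra, not consequences of an already-certified algebra map. Your computation that $\pi(1)=\mathbf{G}\mathbf{A}_1=\mathbf{I}$ (with $\mathbf{A}_1=\mathrm{diag}(\sigma_I)$, since the unit occurs in $\mathbf{M}$ only on the diagonal) is correct under the paper's convention $\sigma_I = \langle e_I e_I\rangle_0$ and non-degeneracy, but it only disposes of the easy half of the claim.

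The route you dismissed as ``strictly more work'' is in fact the paper's proof, and it is the one that closes the gap: one forms $\mathbf{W}=\mathbf{G}\mathbf{A}_s\mathbf{G}\mathbf{A}_s$, uses the Sparsity Lemma \ref{th:sparsity} and Prop. \ref{th:sparsec} to see that $\mathbf{W}$ has a single nonzero entry per row and column, and then identifies that entry via the pair of multiplication-table relations $e_M e_Q = a^s_{\mu q}e_S$, $e_Q e_M = a^s_{q\mu}e_S$, which multiplied together give the identity $\sigma_q\sigma_\mu = a^s_{\mu q}a^s_{q\mu}\sigma_s$ (eq. \ref{eq:fistId}); in the non-degenerate case this forces the surviving entries to sit on the diagonal and equal $\sigma_s$. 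To repair your write-up, either carry out that direct computation, or first extend Lemma \ref{th:struct} (and hence Theorem \ref{th:semigr}) to non-disjoint multi-indices — which amounts to proving the same fundamental identity anyway.
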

\begin{proof}
Consider the matrix
$ \mathbf{W} = \mathbf{G}  \mathbf{A_s} \mathbf{G}  \mathbf{A_s}   $. 
Then 
$ w_{\mu \nu} = \sum_{\lambda} \sigma_\mu \sigma_\lambda a_{ \mu \lambda } a_{  \lambda \nu}$  element-wise.
By Lemma \ref{th:sparsity} $\mathbf{W} $ is sparse so that
$w_{\mu \nu} =  (0; \sigma_\mu \sigma_q a_{ \mu q } a_{  q \nu}) $.

From the structure of $\mathbf{M}$ for the entries containing the element $e_S$ we have the equivalence
\[
\begin{cases}
e_{M} e_{Q} = a^s_{\mu q} e_S,  \ \  S= M \triangle Q \\
e_{Q} e_{M} = a^s_{q \mu} e_S, 
\end{cases}	
\]
After multiplication of the equations we obtain
$
e_{M} e_{Q}  e_{Q} e_{M} = a^s_{\mu q} e_S a^s_{q \mu} e_S 
$,
which simplifies to the \textit{First fundamental identity}:
\begin{equation}\label{eq:fistId}
\sigma_q \sigma_\mu = a^s_{\mu q}   a^s_{q \mu} \sigma_s
\end{equation}
We observe that if  $\sigma_\mu=0$ or $\sigma_q =0$ the result follows trivially.
In this case also $\sigma_s=0$.
Therefore, let's suppose that $ \sigma_s \sigma_q \sigma_\mu \neq 0 $.
We multiply both sides by $ \sigma_s \sigma_q \sigma_\mu  $
to obtain $
\sigma_s = \sigma_q \sigma_\mu a^s_{\mu q}   a^s_{q \mu} 
$.
However, the RHS is a diagonal element of $\mathbf{W}$, therefore by the sparsity it is the only non-zero element for a given row/column so that 
$
\mathbf{W}= \mathbf{E_s^2} = \sigma_s \mathbf{I}
$. 
\end{proof}

\section{Program code}\label{sec:prg}
The Clifford package can be downloaded from a Zenodo repository \cite{Prodanov2023}.
The examples can be downloaded from a Zenodo repository  and it includes the file \rmfamily{climatrep.mac}, which implements different instances of the FVS algorithm \cite{Prodanov2023a}.
\begin{lstlisting}[caption={FVS Maxima code based on the Clifford package}, label=lst:rep]
	fadlevicg2cp(A, v):=block(
	[M:1, K, i:1, n, k:length(clv(A)), cq, c, ss],
		n:2^(ceiling(k/2)),
		array(c,n+1),  for r:0 thru n+1 do c[r]:1,
		A:rat(A),  
		ss:c[1]*v^^n,
		while i<n and K#0 do (
			K:dotsimpc(expand (A.M)),
			cq:-n/i*scalarpart(K),
			if _debug1=all then print("t_{",i,"}=",cq," m_{",i,"}=",K,"\\\\"),
			if K#0 then
				M: rat(K + cq), 
			c[i+1]:cq, ss:ss+c[i+1]*v^^(n-i),
			i:i+1
		), 
		K:dotsimpc(expand(A.M)),
		cq:-n/i*scalarpart(K),
		if _debug1=all then print("t_{",i,"}=",cq," m_{",i,"}=",K,"\\\\"),
		ss:ss+cq,
		if cq=0 then cq:1, 	M:factor(-(M)/cq),
		[M, ss]
	);
\end{lstlisting}
%
%
%
\bibliographystyle{splncs04}
\bibliography{clibib1}
\end{document}